\newtheorem{thrm}{Theorem}[section]
\newtheorem{prop}[thrm]{Proposition}
\theoremstyle{definition}
\newtheorem{definition}[thrm]{Definition}
\newtheorem{remark}[thrm]{Remark}
\numberwithin{equation}{section}
\email{jvilla@correo.uaa.mx}
\thanks{Supported by the grand 118294 of CONACyT and grands PIM 08-2, 11-2 of UAA}
\keywords{Dirichlet problem, discrete martingales, Monte Carlo method}
\subjclass{Primary 60G42, Secondary 35K20}
\begin{document}
\author{Jos\'{e} Villa}
\address{Departamento de Matem\'{a}ticas y F\'{\i}sica\\
Universidad Aut\'{o}noma de Aguascalientes\\
Av. Universidad 940, C.P. 20131, Aguascalientes, Ags.\\
Mexico}
\title[The Dirichlet Problem]{On the Dirichlet Problem}

\begin{abstract}
Using, as main tool, the convergence theorem for discrete martingales and
the mean value property of harmonic functions we solve, a particular case
of, Dirichlet problem.
\end{abstract}

\maketitle

\section{Introduction}

Let $(\mathbb{R}^{d},||\cdot ||)$ be the normed Euclidean space. If $%
A\subset \mathbb{R}^{d}$, we denote by $\overline{A}$ and $\partial A$ the
closure and frontier (or boundary) of $A$, respectively. Let us fix first
the object of study in this paper.

\textbf{The Dirichlet problem (DP): }Given a non-empty, bounded, and open
set $V\subset \mathbb{R}^{d}$ and a continuous function $f:\partial
V\rightarrow \mathbb{R}$, the Dirichlet problem consist in finding a unique
continuous function $f:\overline{V}\rightarrow \mathbb{R}$ such that 
\begin{equation*}
h(x)=f(x),\ \ \forall x\in \partial V,
\end{equation*}%
and having in $V$ continuous partial derivatives of second order which
satisfy Laplace's differential equation, i.e.,
\begin{equation*}
\Delta h(x)=\sum_{k=1}^{d}\frac{\partial ^{2}h(x)}{\partial x_{k}^{2}}=0,\ \
\forall x\in V.
\end{equation*}

The Dirichlet problem has a long history in pure and applied mathematics
(see \cite{Ka}, \cite{G-T}), and it is the basis for more elaborated
problems (see \cite{N-O}, \cite{G-T})). Such problems can be approached in
many different ways. In fact, they can be solved using techniques from
differential equations, Monte Carlo methods, stochastic differential
equations, potential theory, etc.

The Monte Carlo method, introduced by Metropolis and Ulam, is a proceeding
for solving physical problems by a method which essentially depends on a
statistical sampling technique. There are many studies of \textbf{(DP)}
using Monte Carlo techniques (see, for example, \cite{Mu}, \cite{N-O} and
the references there in). In some sense such methods were the motivation to
introduce a random sequence $(X_{n}^{v})_{n}$, $X_{1}^{v}=v\in V$, that
converges almost surely to a point $X_{\infty }^{v}$ belonging to $\partial V
$ (see Subsection 2.1). Using this convergence we are going to deduce that a
solution of \textbf{(DP)} has a specific representation, and of course this
expression implies uniqueness of \textbf{(DP)}. Actually, this interplay
between partial differential equations and stochastic methods was incited by
Kakutani \cite{Ka} who give a probabilistic representation of the solution
to \textbf{(DP)} in terms of certain functional of Brownian motion (see \cite%
{Ch-Wal} or \cite{K-S}).

In the one-dimensional case \textbf{(DP)} always has a solution, in fact it
is piecewise-linear. But for $d\geq 2$, Zaremba \cite{Za} observes that 
\textbf{(DP)} was not always solvable. Hence, the existence is the difficult
part of \textbf{(DP)}. However, restricting our attention to certain regions 
$V$ we get existence. More specifically, we introduce the Poincar\'{e}'s
regularity of $\partial V$ and we proof that the expression given in the
uniqueness is well defined and it is a solution to \textbf{(DP)}.

So, the present paper is a nice application of some elementary results of
martingale theory to a classical subject in mathematics (pure and applied),
as it is the Dirichlet problem.

In the next section we begin remembering a characterization of harmonic
functions, and we use this and the martingale convergence theorem to prove
uniqueness, in Subsection 2.1, and existence, in Subsection 2.2, of \textbf{%
(DP)}.

\section{Solving the Dirichlet problem}

Before we deal with the Dirichlet problem \textbf{(DP)} we introduce an
important class of differentiable functions which are close related with it.

As usual, by $d(x,A)$ we design the distance from the point $x\in \mathbb{R}%
^{d}$ to the set $A\subset \mathbb{R}^{d}$, to be precise%
\begin{equation*}
d(x,A)=\inf \{||x-y||:y\in A\}.
\end{equation*}%
Let $B_{r}(x)=\{y\in \mathbb{R}^{d}:||x-y||<r\}$ be the open ball of radius $%
r>0$ centered at $x\in \mathbb{R}^{d}$. We also define the sphere, $%
S_{r}(x)=\partial B_{r}(x).$

\begin{definition}
Let $A\subset \mathbb{R}^{d}$ be a non empty open set and $h:V\rightarrow 
\mathbb{R}$. We say that\newline
$(i)$ $h$ is \textit{harmonic }in $A$ if%
\begin{equation*}
\Delta h(x)=0,\ \ \forall x\in V\text{.}
\end{equation*}%
\newline
$(ii)$ $h$ has the \textit{mean value property} in $A$ if for each $x\in A$
and $r>0$, such that $\overline{B_{r}(x)}\subset A$, 
\begin{equation*}
h(x)=\frac{1}{\sigma (S_{r}(x))}\int_{S_{r}(x)}h(z)\sigma (dz),
\end{equation*}%
where $\sigma (dz)$ is the Lebesgue (area) measure on $S_{r}(x)$ and $\sigma
(S_{r}(x))=cr^{d-1}$, here $c>0$ is a constant.
\end{definition}

\begin{prop}
\label{caracterizaArmo}Let $A\subset \mathbb{R}^{d}$ be a non empty open
set. A function $h:A\rightarrow \mathbb{R}$ is harmonic in $A$ if and only
if it has the mean value property in $A$.
\end{prop}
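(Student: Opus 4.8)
**

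The plan is to prove both implications via the standard equivalence between harmonicity and the mean value property, relying on the divergence theorem (Green's identities) for one direction and a local regularity/averaging argument for the converse.

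For the forward direction (harmonic $\Rightarrow$ mean value property), I would fix $x\in A$ and $r>0$ with $\overline{B_r(x)}\subset A$, and define the averaged quantity
\begin{equation*}
\varphi(\rho)=\frac{1}{\sigma(S_\rho(x))}\int_{S_\rho(x)}h(z)\,\sigma(dz)
\end{equation*}
for $0<\rho\le r$. After rescaling the integral onto the unit sphere $S_1(0)$ via $z=x+\rho\,\omega$, I would differentiate $\varphi$ in $\rho$ and move the derivative inside the integral, producing the normal derivative $\partial h/\partial n$ on $S_\rho(x)$. By the divergence theorem this integral equals $\int_{B_\rho(x)}\Delta h\,dy$, which vanishes since $h$ is harmonic; hence $\varphi$ is constant. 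Taking $\rho\to 0^+$ and using continuity of $h$ shows $\varphi(\rho)=h(x)$ for all such $\rho$, which is exactly the mean value property.

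For the converse (mean value property $\Rightarrow$ harmonic), the subtlety is that the definition only assumes $h$ continuous, so I would first establish that $h$ is $C^\infty$. The key step is to mollify: convolving $h$ with a radially symmetric smooth approximate identity $\psi_\varepsilon$ supported in a small ball and using the mean value property (integrating the spherical averages in $\rho$ against the weight) shows $h*\psi_\varepsilon=h$ on the interior, so $h$ inherits the smoothness of $\psi_\varepsilon$ and is therefore $C^2$. Once smoothness is in hand, I would run the computation of the forward direction in reverse: the mean value property forces $\varphi(\rho)\equiv h(x)$, so $\varphi'(\rho)=0$, and by the divergence theorem this gives $\int_{B_\rho(x)}\Delta h\,dy=0$ for every small $\rho$. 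Dividing by the volume and letting $\rho\to 0^+$, continuity of $\Delta h$ yields $\Delta h(x)=0$. Since $x\in A$ was arbitrary, $h$ is harmonic.

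The main obstacle is the regularity bootstrap in the converse: one cannot differentiate $h$ until one knows it is smooth, and the mean value property must be leveraged to prove this smoothness rather than assumed. I expect the mollification argument to be the delicate point, requiring care that the radial symmetry of the mollifier lets the mean value property collapse the convolution back to $h$ itself. The differentiation-under-the-integral and divergence-theorem manipulations are routine once the smoothness is secured.
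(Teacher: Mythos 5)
Your proposal is correct and complete in outline: differentiating the spherical average $\varphi(\rho)$ and converting the resulting flux integral to $\int_{B_\rho(x)}\Delta h\,dy$ via the divergence theorem handles the forward direction, and the radial-mollifier identity $h\ast\psi_\varepsilon=h$ (which bootstraps $h$ from continuous to $C^\infty$, after which the same computation run in reverse gives $\Delta h(x)=0$) handles the converse, with the regularity issue correctly identified as the only delicate point. Note that the paper itself gives no argument for this proposition, deferring entirely to Theorem 2 in Section 4.3 of Chung--Walsh and to Karatzas--Shreve; your sketch is essentially the standard proof found in those references, so your approach and the paper's intended one coincide.
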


\begin{proof}
See Theorem 2 in Section 4.3 of \cite{Ch-Wal}, or \cite{K-S}.
\end{proof}

In what follows we are going to consider $(V,f)$ as in \textbf{(DP)}. That
is, $V\subset \mathbb{R}^{d}$ is a set non-empty, open and bounded and $%
f:\partial V\rightarrow \mathbb{R}$ is a continuous function.

\subsection{Uniqueness}

Let $\vartheta _{1},\vartheta _{2},...$ be a sequence of random variables
(r.v.) defined on the same probability space $(\Omega ,\mathcal{F},P)$. Such
r.v. are independently and identically distributed with uniform distribution
on the unitary sphere, $S_{1}(0)\subset \mathbb{R}^{d}$. We denote by $%
E[\cdot ]$ the expectation with respect to $P$.

Let $v\in V$ and $0<r<1$ be arbitrary and fix. Define the random sequence%
\begin{eqnarray}
X_{r}^{v}(1) &=&v,  \label{defsupDir} \\
X_{r}^{v}(n+1) &=&X_{r}^{v}(n)+rd(X_{r}^{v}(n),\partial V)\vartheta _{n},\ \
n\geq 1.  \notag
\end{eqnarray}

The basic connection between harmonic functions and martingales is given by:

\begin{prop}
\label{ArmoMartinga}Let $h:\overline{V}\rightarrow \mathbb{R}$ be continuous
and harmonic in $V$, then the sequence $(h(X_{r}^{v}(n)))_{n}$ is a martingale with
respect $\mathcal{F}_{n}=\sigma (X_{r}^{v}(1),...,X_{r}^{v}(n))$, the
minimal $\sigma $-algebra such that $X_{r}^{v}(1),...,X_{r}^{v}(n)$ are
measurables.
\end{prop}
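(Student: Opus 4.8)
The plan is to verify the three defining properties of a martingale for $(h(X_{r}^{v}(n)))_{n}$ with respect to $(\mathcal{F}_{n})_{n}$: integrability, adaptedness, and the one-step conditional-expectation identity. The whole argument rests on one geometric observation that I would establish first, namely that the chain never leaves $V$. I would prove by induction that $X_{r}^{v}(n)\in V$ for every $n$. For $x\in V$ one has $d(x,\partial V)>0$, and writing $\rho_{n}=r\,d(X_{r}^{v}(n),\partial V)$ the increment satisfies $||X_{r}^{v}(n+1)-X_{r}^{v}(n)||=\rho_{n}$ because $||\vartheta_{n}||=1$. Since $0<r<1$ we have the strict inequality $\rho_{n}<d(X_{r}^{v}(n),\partial V)$, and the elementary fact that $B_{d(x,\partial V)}(x)\subseteq V$ for every $x\in V$ then yields $\overline{B_{\rho_{n}}(X_{r}^{v}(n))}\subset V$; in particular $X_{r}^{v}(n+1)\in V$. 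This is also the step where the constraint $r<1$ is genuinely used, since it forces the \emph{closed} ball to lie inside $V$, which is exactly what the mean value property will require.

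With the chain confined to $V\subset\overline{V}$, both integrability and adaptedness are immediate. As $\overline{V}$ is compact and $h$ is continuous, $h$ is bounded, so each $h(X_{r}^{v}(n))$ is integrable; and $X_{r}^{v}(n)$ is $\mathcal{F}_{n}$-measurable by the very definition of $\mathcal{F}_{n}$, while $h$ is Borel, so $h(X_{r}^{v}(n))$ is $\mathcal{F}_{n}$-measurable.

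The core of the proof is the identity $E[h(X_{r}^{v}(n+1))\mid\mathcal{F}_{n}]=h(X_{r}^{v}(n))$, which I would obtain by conditioning and exploiting independence. From the recursion \eqref{defsupDir}, $X_{r}^{v}(n)$ is a function of $\vartheta_{1},\dots,\vartheta_{n-1}$, so $\mathcal{F}_{n}=\sigma(\vartheta_{1},\dots,\vartheta_{n-1})$ and $\vartheta_{n}$ is independent of $\mathcal{F}_{n}$. Applying the standard ``freezing'' property of conditional expectation to the map $(x,s)\mapsto h(x+r\,d(x,\partial V)\,s)$, with the $\mathcal{F}_{n}$-measurable argument $X_{r}^{v}(n)$ and the independent variable $\vartheta_{n}$, gives, with $x=X_{r}^{v}(n)$ and $\rho=\rho_{n}$,
\begin{equation*}
E[h(X_{r}^{v}(n+1))\mid\mathcal{F}_{n}]=\int_{S_{1}(0)}h(x+\rho s)\,\mu(ds),
\end{equation*}
where $\mu$ is the uniform law on $S_{1}(0)$.

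The most delicate part is identifying this integral with a spherical average. The map $s\mapsto x+\rho s$ pushes the uniform measure on $S_{1}(0)$ forward to the normalized surface measure on $S_{\rho}(x)$, so that
\begin{equation*}
\int_{S_{1}(0)}h(x+\rho s)\,\mu(ds)=\frac{1}{\sigma(S_{\rho}(x))}\int_{S_{\rho}(x)}h(z)\,\sigma(dz).
\end{equation*}
Since $h$ is harmonic in $V$ and $\overline{B_{\rho}(x)}\subset V$, Proposition \ref{caracterizaArmo} (the mean value property) shows this last quantity equals $h(x)=h(X_{r}^{v}(n))$, completing the verification. I expect the only genuine subtleties to be the change-of-variables claim relating the two uniform measures and the bookkeeping needed to invoke the freezing property rigorously; everything else is routine.
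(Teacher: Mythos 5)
Your proposal is correct and follows essentially the same route as the paper: confine the chain to $V$, invoke the freezing property of conditional expectation for the independent variable $\vartheta_{n}$, identify the resulting integral with the normalized surface average over $S_{\rho}(X_{r}^{v}(n))$, and conclude by the mean value characterization of harmonicity (Proposition \ref{caracterizaArmo}). The only difference is one of detail, to your credit: you make explicit the induction showing $\overline{B_{\rho_{n}}(X_{r}^{v}(n))}\subset V$ (where $r<1$ is genuinely used) and the push-forward identification of the two uniform measures, both of which the paper treats as immediate.
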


\begin{proof}
From the definition (\ref{defsupDir}) of $(X_{r}^{v}(n))_{n}$ it follows
immediately that $X_{r}^{v}(n)\in V$, for each $n\in \mathbb{N}$. Since $V$
is bounded, then $\overline{V}$ is compact, therefore $h$ continuous in $%
\overline{V}$ implies that $(h(X_{r}^{v}(n)))$ is an integrable sequence. On
the other hand, since $\vartheta _{n}$ is independent of $\mathcal{F}%
_{n}=\sigma (\vartheta _{1},...,\vartheta _{n-1})$ and $X_{r}^{v}(n)$ is $%
\mathcal{F}_{n}$ measurable we get (see Example 1.5 in Chapter 4 of \cite%
{Durrett})%
\begin{eqnarray*}
E[h(X_{r}^{v}(n+1))|\mathcal{F}_{n}] &=&\left. E\left[ h\left(
x+rd(x,\partial V)\vartheta _{n}\right) \right] \right\vert _{x=X_{r}^{v}(n)}
\\
&=&\left. \frac{1}{\sigma (S_{rd(x,\partial V)}(x))}\int_{S_{rd(x,\partial
V)}\left( x\right) }h(z)\sigma (dz)\right\vert _{x=X_{r}^{v}(n)} \\
&=&\left. h(x)\right\vert _{x=X_{r}^{v}(n)}=h(X_{r}^{v}(n)).
\end{eqnarray*}%
Observe that we have used Proposition \ref{caracterizaArmo} in the third
equality.
\end{proof}

In particular, for each $j\in \{1,...,d\}$ consider the function $h_{j}:%
\overline{V}\rightarrow \mathbb{R}$ defined by $h_{j}(x)=x_{j}$, where $%
x=(x_{1},...,x_{d})$. By the harmonicity of $h_{j}$ the preceding result
implies that $(h_{j}(X_{r}^{v}(n)))_{n}$ is a bounded martingale, then the
martingale convergence theorem (see Theorem 2.10 in Chapter 4 of \cite%
{Durrett}) yields that $\lim_{n\rightarrow \infty
}h_{j}(X_{r}^{v}(n)):=X_{r}^{v,j}(\infty )$, a.s. In this way, 
\begin{equation}
\lim_{n\rightarrow \infty }X_{r}^{v}(n)=X_{r}^{v}(\infty
):=(X_{r}^{v,1}(\infty ),...,X_{r}^{v,d}(\infty ))\text{, a.s.}
\label{defxn}
\end{equation}

\begin{prop}
\label{Lfro}Under the preceding notation, we have $X_{r}^{v}(\infty )\in
\partial V$, a.s.
\end{prop}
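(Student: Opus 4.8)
The plan is to exploit the fact that the length of each increment of the sequence $(X_{r}^{v}(n))_{n}$ is governed exactly by the distance of the current point to the boundary. From the recursion (\ref{defsupDir}) and $||\vartheta _{n}||=1$ one reads off the identity
\[
||X_{r}^{v}(n+1)-X_{r}^{v}(n)||=r\,d(X_{r}^{v}(n),\partial V),\qquad n\geq 1.
\]
First I would work on the almost-sure event where the limit (\ref{defxn}) exists; on this event the sequence is Cauchy, so its increments tend to zero, i.e. $||X_{r}^{v}(n+1)-X_{r}^{v}(n)||\to 0$. Since $r>0$ is fixed, the displayed identity then forces $d(X_{r}^{v}(n),\partial V)\to 0$ a.s.

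Next I would pass to the limit using the continuity of the map $x\mapsto d(x,\partial V)$, which is in fact $1$-Lipschitz and hence continuous on all of $\mathbb{R}^{d}$. Combining the almost-sure convergence $X_{r}^{v}(n)\to X_{r}^{v}(\infty )$ with this continuity gives $d(X_{r}^{v}(n),\partial V)\to d(X_{r}^{v}(\infty ),\partial V)$ a.s. Comparing with the previous step, the limit of the distances must be zero, so $d(X_{r}^{v}(\infty ),\partial V)=0$ a.s. Finally, since $\partial V$ is closed, the set of points lying at zero distance from $\partial V$ is precisely $\partial V$ itself; therefore $X_{r}^{v}(\infty )\in \partial V$ a.s., as claimed. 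All of these statements hold on the single full-measure event on which (\ref{defxn}) is valid, so no further bookkeeping with null sets is required.

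I do not expect a genuine obstacle here: the heart of the matter is the two-line observation that convergence of $(X_{r}^{v}(n))_{n}$ makes its increments vanish, which, through the above identity, makes the distance to the boundary vanish as well. The only point deserving a moment of attention is to record that the step length equals $r\,d(X_{r}^{v}(n),\partial V)$ \emph{exactly}, rather than being merely bounded by it; this exactness is precisely what allows the vanishing of the increments to be transferred to the vanishing of the distance, and it is also what guarantees (from definition (\ref{defsupDir})) that the iterates never leave $V$, so that the limit belongs to $\overline{V}$ and the conclusion $X_{r}^{v}(\infty )\in \partial V$ is consistent.
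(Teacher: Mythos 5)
Your proof is correct, but it takes a genuinely more direct route than the paper's. The paper argues by contradiction: assuming $X_{r}^{v}(\infty )\notin \partial V$ on a set of positive probability, it fixes an index $n_{0}$ past which the sequence stays within $\frac{r}{4}d(X_{r}^{v}(\infty ),\partial V)$ of its limit, and then plays the exact step-length identity (\ref{qqpas}) against the triangle-inequality bounds (\ref{conpoarriba}) and (\ref{lfr}) to reach the contradiction $1-\frac{r}{4}<\frac{1}{4}$. (As a side remark, the triangle inequality applied to (\ref{conlimsuc}) actually gives the bound $\frac{r}{2}d(X_{r}^{v}(\infty ),\partial V)$ in (\ref{conpoarriba}), not $\frac{r}{4}$, but the resulting contradiction $1-\frac{r}{4}<\frac{1}{2}$ still violates $0<r<1$, so the paper's argument survives.) You instead pass to the limit directly: convergence of the sequence makes its increments vanish, the exact identity $||X_{r}^{v}(n+1)-X_{r}^{v}(n)||=r\,d(X_{r}^{v}(n),\partial V)$ transfers this to $d(X_{r}^{v}(n),\partial V)\rightarrow 0$, and the $1$-Lipschitz continuity of $x\mapsto d(x,\partial V)$ --- a tool the paper never invokes, since its pointwise inequalities reproduce it by hand --- carries the conclusion to the limit, with closedness of $\partial V$ finishing the argument. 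Both proofs pivot on the same crux, which you correctly identify: the step length \emph{equals} $r\,d(X_{r}^{v}(n),\partial V)$ rather than being merely bounded by it. What your version buys is the elimination of the contradiction scaffolding and of the delicate bookkeeping at the single index $n_{0}$; it is shorter, works on one full-measure event without auxiliary sets of positive probability, and is arguably the proof the paper's computation is secretly performing.
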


\begin{proof}
Suppose the contrary, this means that there exits a measurable set $\Omega
^{\prime }\subset \Omega $ with positive probability such that $%
\lim_{n\rightarrow \infty }X_{r}^{v}(n,\omega )=X_{r}^{v}(\infty ,\omega
)\notin \partial V$, for each $\omega \in \Omega ^{\prime }$. Because $%
\partial V$ is a closed set we have, $d(X_{r}^{v}(\infty ,\omega ),\partial
V)>0$. Then, there exists $n_{0}\in \mathbb{N}$ for which%
\begin{equation}
||X_{r}^{v}(n,\omega )-X_{r}^{v}(\infty ,\omega )||<\frac{r}{4}%
d(X_{r}^{v}(\infty ,\omega ),\partial V)\text{, \ }\forall n\geq n_{0}.
\label{conlimsuc}
\end{equation}%
From the triangle inequality we obtain%
\begin{equation}
||X_{r}^{v}(n_{0}+1,\omega )-X_{r}^{v}(n_{0},\omega )||<\frac{r}{4}%
d(X_{r}^{v}(\infty ,\omega ),\partial V).  \label{conpoarriba}
\end{equation}%
On the other hand, (\ref{defsupDir}) implies%
\begin{equation}
||X_{r}^{v}(n_{0},\omega )-X_{r}^{v}(n_{0}+1,\omega
)||=rd(X_{r}^{v}(n_{0},\omega ),\partial V).  \label{qqpas}
\end{equation}%
Using the inequality (\ref{conlimsuc}) we get%
\begin{eqnarray*}
d(X_{r}^{v}(\infty ,\omega ),\partial V) &\leq &||X_{r}^{v}(\infty ,\omega
)-X_{r}^{v}(n_{0},\omega )||+d(X_{r}^{v}(n_{0},\omega ),\partial V) \\
&\leq &\frac{r}{4}d(X_{r}^{v}(\infty ,\omega ),\partial
V)+d(X_{r}^{v}(n_{0},\omega ),\partial V),
\end{eqnarray*}%
then (\ref{qqpas}) yields%
\begin{eqnarray}
(1-\frac{r}{4})d(X_{r}^{v}(\infty ,\omega ),\partial V) &\leq
&d(X_{r}^{v}(n_{0},\omega ),\partial V)  \notag \\
&=&\frac{1}{r}||X_{r}^{v}(n_{0},\omega )-X_{r}^{v}(n_{0}+1,\omega )||.
\label{lfr}
\end{eqnarray}%
From (\ref{lfr}) and (\ref{conpoarriba}) we conclude that $1-\frac{r}{4}<%
\frac{1}{4}$, which is a contradiction to $0<r<1$.
\end{proof}

Now we are ready to deal with the uniqueness of the Dirichlet problem.

\begin{thrm}
There is at most one solution to $\mathbf{(DP})$.
\end{thrm}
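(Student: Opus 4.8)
The plan is to prove uniqueness via a probabilistic representation of any solution, using the martingale and convergence machinery just assembled. Suppose $h_{1}$ and $h_{2}$ are both solutions to $\mathbf{(DP)}$. Then $g:=h_{1}-h_{2}$ is continuous on $\overline{V}$, harmonic in $V$, and vanishes on $\partial V$ because both solutions agree with $f$ there. It suffices to show $g\equiv 0$ on $\overline{V}$, equivalently $g(v)=0$ for every $v\in V$ (the boundary values are already zero).

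Fix an arbitrary $v\in V$ and any $0<r<1$. By Proposition \ref{ArmoMartinga} the sequence $(g(X_{r}^{v}(n)))_{n}$ is a martingale with respect to $(\mathcal{F}_{n})_{n}$, so in particular $E[g(X_{r}^{v}(n))]=g(X_{r}^{v}(1))=g(v)$ for every $n$. The aim is to pass to the limit $n\to\infty$ on the left-hand side. By (\ref{defxn}) we have $X_{r}^{v}(n)\to X_{r}^{v}(\infty )$ almost surely, and since $g$ is continuous on $\overline{V}$ this gives $g(X_{r}^{v}(n))\to g(X_{r}^{v}(\infty ))$ almost surely. Because $\overline{V}$ is compact and $g$ is continuous, $g$ is bounded on $\overline{V}$, so the sequence $(g(X_{r}^{v}(n)))_{n}$ is uniformly bounded; the dominated convergence theorem then yields
\begin{equation*}
g(v)=\lim_{n\to\infty}E[g(X_{r}^{v}(n))]=E[g(X_{r}^{v}(\infty ))].
\end{equation*}
Finally, Proposition \ref{Lfro} tells us that $X_{r}^{v}(\infty )\in\partial V$ almost surely, and $g$ vanishes on $\partial V$, so $g(X_{r}^{v}(\infty ))=0$ almost surely, whence $g(v)=E[g(X_{r}^{v}(\infty ))]=0$. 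Since $v\in V$ was arbitrary, $g\equiv 0$ on $V$, and therefore $h_{1}=h_{2}$ on all of $\overline{V}$.

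The main obstacle is the interchange of limit and expectation, but it is mild here: uniform boundedness of $(g(X_{r}^{v}(n)))_{n}$ via compactness of $\overline{V}$ makes dominated convergence immediate, so the real content is entirely delegated to the two preceding propositions. One subtlety worth flagging is that the martingale property in Proposition \ref{ArmoMartinga} requires $h$ to be continuous on $\overline{V}$ and harmonic in $V$, and the difference $g$ inherits exactly these properties, so the machinery applies verbatim to $g$ even though it was stated for a solution $h$. No additional hypotheses on $V$ (such as the regularity condition promised for existence) are needed for uniqueness.
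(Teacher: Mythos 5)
Your proof is correct and takes essentially the same route as the paper: both arguments rest on the martingale property of Proposition \ref{ArmoMartinga}, the almost sure convergence (\ref{defxn}), Proposition \ref{Lfro}, and dominated convergence. The only cosmetic difference is that you apply this machinery to the difference $g=h_{1}-h_{2}$, whereas the paper derives the representation $h(v)=E[f(X_{r}^{v}(\infty ))]$ directly for an arbitrary solution $h$ and reads off uniqueness from it, which has the minor advantage of exhibiting the explicit formula later used as the candidate (\ref{defh}) in the existence part.
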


\begin{proof}
Let $h:\overline{V}\rightarrow \mathbb{R}$ be a solution to $\mathbf{(}$%
\textbf{DP}$)$. By the continuity\ of $h$ in $\overline{V}$ we get%
\begin{equation*}
\lim_{n\rightarrow \infty }h(X_{r}^{v}(n))=f(X_{r}^{v}(\infty )),\text{ \
a.s.}
\end{equation*}%
Now, due to $(h(X_{r}^{v}(n)))_{n}$ is a martingale (see Proposition \ref%
{ArmoMartinga}) and dominated convergence theorem implies%
\begin{equation*}
h(v)=E[h(X_{r}^{v}(1))]=\lim_{n\rightarrow \infty
}E[h(X_{r}^{v}(n))]=E[f(X_{r}^{v}(\infty ))].
\end{equation*}%
Therefore, $h(v)=E[f(X_{r}^{v}(\infty ))]$, for each $v\in V$. This equality
implies the uniqueness of $\mathbf{(}$\textbf{DP}$)$.
\end{proof}

\subsection{Existence}

Let $0<r<1$ be fix. Define $h:\overline{V}\rightarrow \mathbb{R}$ as%
\begin{equation}
h(v)=\left\{ 
\begin{tabular}{ll}
$f(v),$ & $v\in \partial V,$ \\ 
$E[f(X_{r}^{v}(\infty ))],$ & $v\in V,$%
\end{tabular}%
\right.  \label{defh}
\end{equation}%
where $X_{r}^{v}(\infty )$ is given by (\ref{defxn}).

From the uniqueness argument we see that such function should be the
solution of \textbf{(DP)}, moreover it also suggest that $h$ does not depend
of $r$. We begin verifying that this is the case.

\begin{prop}
\label{hwelldef}The function $h$ given in $($\ref{defh}$)$ is well define.
\end{prop}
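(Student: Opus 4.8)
The content of this Proposition is that the interior value $E[f(X_{r}^{v}(\infty))]$ is independent of the parameter $r\in(0,1)$. To make this explicit I would write $h_{r}(v):=E[f(X_{r}^{v}(\infty))]$ for $v\in V$ and $h_{r}(v):=f(v)$ for $v\in\partial V$, so that the goal becomes $h_{r_{1}}\equiv h_{r_{2}}$ for all $r_{1},r_{2}\in(0,1)$. The engine of the argument is that, by (\ref{defsupDir}), $(X_{r}^{v}(n))_{n}$ is a time-homogeneous Markov chain whose one-step transition from a state $x\in V$ is the uniform law on the sphere $S_{rd(x,\partial V)}(x)$; in particular, conditionally on $X_{r}^{v}(2)=w$ the tail of the trajectory is distributed as $X_{r}^{w}$, hence $X_{r}^{v}(\infty)$ is conditionally distributed as $X_{r}^{w}(\infty)$.

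First I would condition on the first jump and use the tower property to obtain, for every $v\in V$, the mean value identity
\begin{equation*}
h_{r}(v)=E\big[h_{r}(X_{r}^{v}(2))\big]=\frac{1}{\sigma(S_{\rho}(v))}\int_{S_{\rho}(v)}h_{r}(z)\,\sigma(dz),\qquad \rho=rd(v,\partial V),
\end{equation*}
the second equality being just that $X_{r}^{v}(2)$ is uniform on $S_{\rho}(v)$. Granting that this forces $h_{r}$ to be harmonic in $V$ (via Proposition \ref{caracterizaArmo}) and continuous up to $\partial V$ with $h_{r}|_{\partial V}=f$, I would then repeat the optional-stopping computation already used for uniqueness, but crossing the two parameters: Proposition \ref{ArmoMartinga} applied to the harmonic function $h_{r_{2}}$ and to the $r_{1}$-walk makes $(h_{r_{2}}(X_{r_{1}}^{v}(n)))_{n}$ a bounded martingale, so that by the martingale convergence theorem, dominated convergence and Proposition \ref{Lfro},
\begin{equation*}
h_{r_{2}}(v)=E\big[h_{r_{2}}(X_{r_{1}}^{v}(1))\big]=\lim_{n}E\big[h_{r_{2}}(X_{r_{1}}^{v}(n))\big]=E\big[f(X_{r_{1}}^{v}(\infty))\big]=h_{r_{1}}(v).
\end{equation*}
By symmetry this yields $h_{r_{1}}\equiv h_{r_{2}}$, so the value in (\ref{defh}) does not depend on $r$ and $h$ is well defined.

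The hard part will be the two facts I granted in passing. The Markov computation only delivers the mean value identity for the single radius $\rho=rd(v,\partial V)$ attached to each point, whereas Proposition \ref{caracterizaArmo} presupposes the property for all admissible radii; promoting this restricted, variable-radius mean value property to genuine harmonicity is the real analytic obstacle. I would first try to establish continuity of $h_{r}$ inside $V$ (estimating $|h_{r}(v)-h_{r}(v')|$ from the identity above) and then either invoke a one-radius mean value theorem or exploit that the restricted property already yields a maximum principle: a point of $\{h_{r}=\max_{\overline{V}}h_{r}\}$ at which some coordinate is extremal cannot be interior, since its defining sphere would contain points with a strictly larger coordinate. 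The second, and genuinely global, difficulty is the boundary continuity $h_{r}|_{\partial V}=f$ needed to identify the martingale limit; this is precisely the regularity phenomenon that the Poincar\'{e} condition is introduced to control, so a fully rigorous proof of well-definedness ultimately leans on the same boundary analysis as the existence theorem that follows.
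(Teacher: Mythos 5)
Your reduction is sound only modulo the two facts you grant, and the second of them is fatal rather than merely hard. The crossed-martingale step needs $h_{r_{2}}$ to be continuous on $\overline{V}$ with $h_{r_{2}}|_{\partial V}=f$: Proposition \ref{ArmoMartinga} hypothesizes continuity on $\overline{V}$, and without it you cannot identify $\lim_{n}h_{r_{2}}(X_{r_{1}}^{v}(n))$ with $f(X_{r_{1}}^{v}(\infty))$. But boundary continuity of $h_{r}$ at $v\in\partial V$ is exactly the regularity of $v$, which the paper obtains only later under a barrier (Poincar\'{e}) hypothesis and which genuinely fails for some $(V,f)$ by Zaremba's example \cite{Za} --- whereas the present proposition is asserted for an arbitrary bounded open $V$. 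Worse, if every boundary point were regular, then $h_{r}$ would already solve \textbf{(DP)} and well-definedness would follow instantly from the uniqueness theorem; so your route deduces the easy statement from the hard one, i.e.\ it is circular. Your first granted fact is also a known trap: the Markov computation gives a mean value identity only at the single radius $rd(v,\partial V)$ attached to each point, and promoting such a restricted one-radius property to harmonicity is a delicate theorem in its own right, not a consequence of Proposition \ref{caracterizaArmo}. Note that the paper's own harmonicity proof for $h$ dissolves this obstruction by \emph{using} well-definedness: once $E[f(X_{r}^{v}(\infty))]$ is known to be independent of $r$, one may tune $r=s/d(v,\partial V)$ to realize any admissible radius $s$, so harmonicity logically comes after, not before, this proposition.

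The paper's actual proof avoids all boundary analysis by putting the harmonicity on the test function rather than on $h$. It extends $f$ by Tietze--Urysohn to a continuous $\bar{f}$ on the neighborhood $V_{1}=\{x:d(x,\overline{V})<1\}$ and approximates $\bar{f}$ uniformly on $\overline{V}$ by functions $f_{\varepsilon}$ harmonic in $V_{1}$. Each $f_{\varepsilon}$ is automatically continuous on $\overline{V}$ and harmonic in $V$, so Proposition \ref{ArmoMartinga} applies directly: $(f_{\varepsilon}(X_{r}^{v}(n)))_{n}$ is a martingale and $E[f_{\varepsilon}(X_{r}^{v}(n))]=f_{\varepsilon}(v)$, a quantity containing no $r$ at all. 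Letting $n\rightarrow\infty$ (dominated convergence, with Proposition \ref{Lfro} guaranteeing $X_{r}^{v}(\infty)\in\partial V$ so that $\bar{f}(X_{r}^{v}(\infty))=f(X_{r}^{v}(\infty))$) and then $\varepsilon\downarrow 0$ (uniform convergence on $\overline{V}$) yields $E[f(X_{r}^{v}(\infty))]=\lim_{\varepsilon\downarrow 0}f_{\varepsilon}(v)$ simultaneously for every $r\in(0,1)$. If you want to salvage your idea, this is the fix: do not try to prove that $h_{r}$ itself is harmonic with boundary values $f$; instead run your martingale computation on harmonic approximants of $f$ defined on a neighborhood of $\overline{V}$, for which the needed continuity is free.
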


\begin{proof}
By the Tietze-Urysohn theorem (see (4.5.1) in \cite{Diu}) there exists a
continuous function $\bar{f}:V_{1}:=\{x\in \mathbb{R}^{d}:d(x,\overline{V}%
)<1\}\rightarrow \mathbb{R}$ such that $\bar{f}|_{\partial V}=f$. It is easy
to see that $\bar{f}\in L^{2}(V_{1})$, then there exists a sequence $%
(f_{\varepsilon })_{\varepsilon >0}$ of harmonic functions in $V_{1}$ such
that (see Proposition 21.2c in \cite{DiBe}) 
\begin{equation*}
\lim_{\varepsilon \downarrow 0}f_{\varepsilon }(x)=\bar{f}(x)\text{, \
uniformly in }\overline{V}\text{.}
\end{equation*}%
Let $r,s\in (0,1)$. The Proposition \ref{Lfro} and dominated convergence
theorem yields%
\begin{eqnarray*}
E[f(X_{r}^{v}(\infty ))] &=&E[\bar{f}(X_{r}^{v}(\infty ))] \\
&=&\lim_{\varepsilon \downarrow 0}E[f_{\varepsilon }(X_{r}^{v}(\infty ))] \\
&=&\lim_{\varepsilon \downarrow 0}\lim_{n\rightarrow \infty
}E[f_{\varepsilon }(X_{r}^{v}(n))] \\
&=&\lim_{\varepsilon \downarrow 0}\lim_{n\rightarrow \infty }f_{\varepsilon
}(v)=E[f(X_{s}^{v}(\infty ))].
\end{eqnarray*}%
In the last equality we have used Proposition \ref{ArmoMartinga}.
\end{proof}

As we will see the harmonicity of $h$ is the easy part of \textbf{(DP)}.

\begin{prop}
The function $h$ defined in (\ref{defh}) is harmonic in $V$.
\end{prop}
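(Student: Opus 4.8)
The goal is to show that $h$ defined in (\ref{defh}) is harmonic in $V$. By Proposition \ref{caracterizaArmo}, it suffices to verify that $h$ satisfies the mean value property in $V$. So I want to prove that for each $v\in V$ and each $\rho>0$ with $\overline{B_\rho(v)}\subset V$,
$$
h(v)=\frac{1}{\sigma(S_\rho(v))}\int_{S_\rho(v)}h(z)\,\sigma(dz).
$$

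The plan is to exploit the structure of the random walk $(X_r^v(n))_n$ together with the independence of the increments. Fix $v\in V$ and $\rho$ as above. The key observation is that the first step of the walk from $v$ moves to $v+rd(v,\partial V)\vartheta_1$, which is uniformly distributed on the sphere $S_{rd(v,\partial V)}(v)$. If I choose the step-size parameter so that the first sphere is exactly $S_\rho(v)$, then conditioning on the first step and using the strong Markov-type property of the construction should express $h(v)=E[f(X_r^v(\infty))]$ as an average of $h$ over that sphere. Concretely, I would first condition on $\vartheta_1$: writing $z=v+rd(v,\partial V)\vartheta_1\in S_{rd(v,\partial V)}(v)$, the tail of the walk started at $v$ after its first step has the same law as a walk started afresh at $z$, so that
$$
h(v)=E\bigl[f(X_r^v(\infty))\bigr]=E\bigl[h(z)\bigr]\big|_{z=v+rd(v,\partial V)\vartheta_1}=\frac{1}{\sigma(S_{rd(v,\partial V)}(v))}\int_{S_{rd(v,\partial V)}(v)}h(z)\,\sigma(dz),
$$
where the middle equality uses that $h(z)=E[f(X_r^z(\infty))]$ for interior points $z$ and that $h$ does not depend on $r$ by Proposition \ref{hwelldef}. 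This already gives the mean value property on the specific sphere of radius $rd(v,\partial V)$.

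To obtain the mean value property for an \emph{arbitrary} admissible radius $\rho$, I would use the freedom in $r$. Since $h$ is independent of $r$ (Proposition \ref{hwelldef}), and since for fixed $v$ the radius $rd(v,\partial V)$ ranges over the whole interval $(0,d(v,\partial V))$ as $r$ ranges over $(0,1)$, I can match $rd(v,\partial V)=\rho$ for any $\rho<d(v,\partial V)$; and the condition $\overline{B_\rho(v)}\subset V$ forces $\rho<d(v,\partial V)$, so every admissible $\rho$ is attainable. Thus the identity above holds for every $\rho$ with $\overline{B_\rho(v)}\subset V$, which is precisely the mean value property. An invocation of Proposition \ref{caracterizaArmo} then concludes that $h$ is harmonic in $V$.

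The main obstacle is justifying rigorously the middle step, namely the identity $h(v)=E[h(z)]|_{z=\text{first step}}$. This is a Markov-property / tower-property argument: one must argue that conditionally on the first increment $\vartheta_1$, the remaining sequence $(X_r^v(n+1))_{n\ge 1}$ is distributed as the walk $(X_r^z(n))_n$ started at the new point $z$, and then take expectations to peel off the first step. Because the $\vartheta_n$ are i.i.d. and the recursion in (\ref{defsupDir}) is homogeneous, this should follow from conditioning on $\mathcal{F}_1=\sigma(\vartheta_1)$ together with the independence of $(\vartheta_n)_{n\ge 2}$ from $\vartheta_1$; the continuity and boundedness of $f$ on $\partial V$ (hence integrability, via the compactness of $\overline{V}$ already used in Proposition \ref{ArmoMartinga}) guarantee that all the expectations are finite and that dominated convergence applies when interchanging limits. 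Care is needed only to confirm that the conditional distribution of $z$ is genuinely uniform on $S_{rd(v,\partial V)}(v)$, which is immediate from the uniform law of $\vartheta_1$ on $S_1(0)$ and the scaling by $rd(v,\partial V)$.
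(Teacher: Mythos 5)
Your proposal is correct and follows essentially the same route as the paper: the paper likewise picks $r=s/d(v,\partial V)<1$ so that the first step of the walk lands exactly on the desired sphere, handles the Markov-type step you flag by conditioning on $X_r^v(2)$ and passing to the limit via dominated convergence for conditional expectations, and uses the $r$-independence of $h$ (Proposition \ref{hwelldef}) to identify $E[f(X_r^z(\infty))]$ with $h(z)$ before concluding through the mean value characterization of Proposition \ref{caracterizaArmo}. The only cosmetic difference is that the paper runs the conditioning argument through the Tietze extension $\bar{f}$ at finite times $n$ before letting $n\to\infty$, whereas you phrase it directly in terms of the law of the tail of the sequence; both justifications are sound.
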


\begin{proof}
Let $v\in V$ and $0<s$ such that $\overline{B_{s}(v)}\subset V$. This
implies that 
\begin{equation}
r:=\frac{s}{d(v,\partial V)}<1.  \label{der}
\end{equation}%
Using the notation of Proposition \ref{hwelldef}, we have for $n=2,3,...$,
by Proposition \ref{ArmoMartinga}, that%
\begin{equation*}
E[\bar{f}(X_{r}^{v}(n))|X_{r}^{v}(2)]=\left. E[\bar{f}(X_{r}^{\vartheta
}(n-1))]\right\vert _{\vartheta =X_{r}^{v}(2)}.
\end{equation*}%
The dominated convergence theorem, for conditional expectations, guaranties
that 
\begin{equation*}
E[\bar{f}(X_{r}^{v}(\infty ))|X_{r}^{v}(2)]=\left. E[\bar{f}%
(X_{r}^{\vartheta }(\infty ))]\right\vert _{\vartheta =X_{r}^{v}(2)}.
\end{equation*}%
Therefore, by (\ref{der}),%
\begin{eqnarray*}
E[f(X_{r}^{v}(\infty ))] &=&E[E[f(X_{r}^{v}(\infty ))|X_{r}^{v}(2)]] \\
&=&E\left[ \left. E[f(X_{r}^{\vartheta }(\infty ))]\right\vert _{\vartheta
=X_{r}^{v}(2)}\right]  \\
&=&\frac{1}{\sigma (S_{s}\left( v\right) )}\int_{S_{s}\left( v\right)
}E[f(X_{r}^{z}(\infty ))]\sigma (dz).
\end{eqnarray*}%
The result follows from Proposition \ref{caracterizaArmo}.
\end{proof}

For $d\geq 2$, as we have already mention, it is needed to impose some
regularity condition on the frontier of $V$ in order to get the continuity
of $h$ in $\overline{V}$.

\begin{definition}
We say that $v\in \partial V$ is a regular point for $(V,f)$ if 
\begin{equation*}
\lim_{\substack{ x\rightarrow v  \\ x\in V}}E[f(X_{r}^{x}(\infty ))]=f(v).
\end{equation*}
\end{definition}

\begin{remark}
If each point of $\partial V$ is regular, then (\ref{defh}) is the solution
to $\mathbf{(DP)}$.
\end{remark}

As we have observed there exist $(V,f)$ such that $\mathbf{(DP)}$ does not
have a solution. Hence it is convenient to have a sufficient condition to
analyze the regularity of frontier points of $V$. This is the reason why we
introduce the following concept.

\begin{definition}
Let $v\in \partial V$. A continuous function $q_{v}:\overline{V}\rightarrow 
\mathbb{R}$ is called a barrier at $v$ if $q_{v}$ is harmonic in $V$, $%
q_{v}(v)=0$ and 
\begin{equation}
q_{v}(x)>0,\ \ \forall x\in \overline{V}\backslash \{v\}.  \label{limsp}
\end{equation}
\end{definition}

\begin{thrm}
Let $v\in \partial V$ be a point with a barrier $q_{v}$, then it is regular.
\end{thrm}

\begin{proof}
Let $M=\sup \{|f(x)|:x\in \partial V\}$. Let $\varepsilon >0$, then there
exists $\delta >0$ such that 
\begin{equation*}
x\in \partial V\text{, }||x-v||<\delta \Rightarrow |f(x)-f(v)|<\varepsilon .
\end{equation*}%
On the other hand, from (\ref{limsp})%
\begin{equation*}
K:=\inf \{q_{v}(z):||z-v||\geq \delta ,z\in \overline{V}\}>0,
\end{equation*}%
hence%
\begin{equation*}
K^{-1}q_{v}(z)\geq 1,\ \ \forall z\in \overline{V},\ ||z-v||\geq \delta .
\end{equation*}%
Therefore%
\begin{eqnarray*}
|f(x)-f(v)| &<&\varepsilon +2M \\
&\leq &\varepsilon +(2MK^{-1})q_{v}(x)\text{, \ }\forall x\in \partial V.
\end{eqnarray*}%
Let $(v_{k})$ be an arbitrary sequence in $V$ such that $\lim_{k\rightarrow
\infty }v_{k}=v$. Define $X_{r}^{v_{k}}(\infty )$ as we did in (\ref{defxn}%
). Proposition \ref{ArmoMartinga} implies,%
\begin{eqnarray*}
|E[f(v)]-E[f(X_{r}^{v_{n}}(\infty ))]| &\leq &E[|f(v)-f(X_{r}^{v_{n}}(\infty
))|] \\
&\leq &\varepsilon +(2MK^{-1})E[q_{v}(X_{r}^{v_{n}}(\infty ))] \\
&=&\varepsilon +(2MK^{-1})q_{v}(v_{n}).
\end{eqnarray*}%
From the continuity of $q_{v}$ we are done.
\end{proof}

As an application we get a classical condition for regularity of the points
in the frontier of $V$. A point $v\in \partial V$ satisfies Poincar\'{e}'s
condition if we have a ball $B_{s}(u)$ such that $\overline{V}\cap \overline{%
B_{s}(u)}=\{v\}$.

\begin{prop}
If $v\in \partial V$ satisfies Poincar\'{e}'s condition, then it is regular.
\end{prop}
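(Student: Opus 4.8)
The plan is to reduce this statement to the preceding theorem by exhibiting an explicit barrier $q_v$ at $v$; once such a barrier is produced, regularity follows immediately. So the whole task becomes the construction of a continuous function on $\overline{V}$ that is harmonic in $V$, vanishes at $v$, and is strictly positive on $\overline{V}\setminus\{v\}$.

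First I would extract the geometric content of Poincar\'{e}'s condition. Writing $s$ and $u$ for the radius and center of the touching ball, the hypothesis $\overline{V}\cap\overline{B_{s}(u)}=\{v\}$ says that every $x\in\overline{V}\setminus\{v\}$ lies outside $\overline{B_{s}(u)}$, i.e. $\|x-u\|>s$. Two further facts are needed. One, $u\notin\overline{V}$: indeed $u\in\overline{B_{s}(u)}$, so if $u$ were in $\overline{V}$ it would equal $v$, contradicting $\|u-v\|=s>0$. Two, $v$ lies on the sphere $S_{s}(u)$, that is $\|v-u\|=s$. To see the latter, note that $v\in\overline{B_{s}(u)}$ gives $\|v-u\|\le s$; if the inequality were strict then $v\in B_{s}(u)$, and since $v\in\partial V$ every neighbourhood of $v$ meets $V$, producing points of $\overline{V}\cap B_{s}(u)$ other than $v$, again a contradiction.

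With these facts in hand I would take the barrier to be a fundamental solution of the Laplacian with pole at $u$, normalised to vanish at $v$. For $d\ge 3$ set
\begin{equation*}
q_{v}(x)=s^{2-d}-\|x-u\|^{2-d},
\end{equation*}
and for $d=2$ set $q_{v}(x)=\log(\|x-u\|/s)$. Because $u\notin\overline{V}$, the function $x\mapsto\|x-u\|^{2-d}$ (resp. $x\mapsto\log\|x-u\|$) is smooth and harmonic on an open neighbourhood of the compact set $\overline{V}$, so $q_{v}$ is continuous on $\overline{V}$ and harmonic in $V$. The normalisation gives $q_{v}(v)=s^{2-d}-s^{2-d}=0$ (resp. $\log(s/s)=0$). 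Finally, since $t\mapsto t^{2-d}$ is strictly decreasing on $(0,\infty)$ for $d\ge 3$, and $\log$ is strictly increasing for $d=2$, the inequality $\|x-u\|>s$ for $x\ne v$ yields $q_{v}(x)>0$ on $\overline{V}\setminus\{v\}$. Thus $q_{v}$ is a barrier at $v$, and the preceding theorem finishes the proof.

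The only genuinely delicate point is the geometric claim that $v$ sits on the sphere $S_{s}(u)$ rather than strictly inside the ball; everything else is the routine verification that the Newtonian kernel, with its pole placed outside $\overline{V}$, does the job. I would treat the cases $d=2$ and $d\ge 3$ uniformly by simply recording both kernels, since the sign analysis is identical, resting only on the monotonicity of the radial profile, once the geometry has been settled.
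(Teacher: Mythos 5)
Your proof is correct and follows exactly the paper's route: the paper's own proof simply exhibits the same two barrier functions, $q_{v}(x)=\log(\|x-u\|/s)$ for $d=2$ and $q_{v}(x)=s^{2-d}-\|x-u\|^{2-d}$ for $d\geq 3$, and invokes the preceding theorem. You have merely supplied the verification the paper leaves to the reader (that $u\notin\overline{V}$, that $\|v-u\|=s$, and the monotonicity argument for positivity), and all of those details check out.
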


\begin{proof}
Merely observe that $q_{v}:\overline{V}\rightarrow \mathbb{R}$, defined as,
\begin{equation*}
q_{v}(x)=\left\{ 
\begin{array}{ll}
\log \left( \frac{||x-u||}{s}\right) , & d=2, \\ 
s^{2-d}-||x-u||^{2-d}, & d\geq 3,%
\end{array}
\right. 
\end{equation*}
is a barrier at $v.$
\end{proof}

\bigskip

\end{document}